\def\titlerunning#1{\gdef\titrun{#1}}
\def\author#1{\gdef\autrun{\def\and{\unskip, }#1}\gdef\@author{#1}}
\def\address#1{{\def\and{\\\hspace*{18pt}}\renewcommand{\thefootnote}{}%
\footnote {#1}}%
\markboth{\autrun}{\titrun}}
\def\email#1{e-mail: #1}
\def\keywords#1{\par\medskip
\noindent\textbf{Keywords.} #1}
\def\MSC#1{\par\medskip
\noindent\textbf{MSC.} #1}
\definecolor{persianblue}{rgb}{0.11, 0.22, 0.73}
\definecolor{persiangreen}{rgb}{0.0, 0.65, 0.58}
\Crefname{theorem}{Theorem}{Theorems}
\Crefname{lemma}{Lemma}{Lemmas}
\crefname{claim}{Claim}{Claims}
\newtheorem{theorem}{Theorem}[section]
\newtheorem*{theorem*}{Theorem}
\newtheorem*{mainthm*}{Main Theorem}
\newtheorem{lemma}[theorem]{Lemma}
\newtheorem{proposition}[theorem]{Proposition}
\newtheorem{corollary}[theorem]{Corollary}
\newtheorem{problem*}{Problem}[]
\newtheorem{fact}[theorem]{Fact}
\newtheorem{definition}[theorem]{Definition}
\numberwithin{equation}{section}
\newtheorem{imp-remark}[theorem]{\textbf{Important Remark}}
\numberwithin{equation}{section}
\theoremstyle{remark}
\let\qed@empty\openbox 
\def\@begintheorem#1#2[#3]{%
  \deferred@thm@head{%
    \the\thm@headfont\thm@indent
    \@ifempty{#1}
      {\let\thmname\@gobble}
      {\let\thmname\@iden}%
    \@ifempty{#2}
      {\let\thmnumber\@gobble\global\let\qed@current\qed@empty}
      {\let\thmnumber\@iden\xdef\qed@current{#2}}%
    \@ifempty{#3}
      {\let\thmnote\@gobble}
      {\let\thmnote\@iden}%
    \thm@swap\swappedhead
    \thmhead{#1}{#2}{#3}%
    \the\thm@headpunct\thmheadnl\hskip\thm@headsep
  }\ignorespaces
}
\renewcommand{\qedsymbol}{%
  \ifx\qed@thiscurrent\qed@empty
    \qed@empty
  \else
    \fbox{\scriptsize\qed@thiscurrent}%
  \fi
}
\renewcommand{\proofname}{%
  Proof%
  \ifx\qed@thiscurrent\qed@empty
  \else
    \ of \qed@thiscurrent
  \fi
}
\xpretocmd{\proof}{\let\qed@thiscurrent\qed@current}{}{}
\newenvironment{proof*}[1]
  {\def\qed@thiscurrent{\ref{#1}}\proof}
  {\endproof}
\let\blx@rerun@biber\relax
\begin{document}


\baselineskip=17pt



\titlerunning{IGMP And The Approximation Property}
\title{Indestructible Guessing Models And The Approximation Property}

\author{Rahman Mohammadpour}

\date{}

\maketitle

\address{Rahman Mohammadpour: Institut für Diskrete Mathematik und Geometrie, TU Wien,
1040 Vienna, Austria.\\ \email{rahmanmohammadpour@gmail.com}}



\begin{abstract} 

In this short note, we shall prove some observations regarding the connection between indestructible $\omega_1$-guessing models and the $\omega_1$-approximation property of forcing notions.

\keywords{Approximation Property, Guessing Model, Indestructible Guessing Model}
\MSC{03E35}

\end{abstract}

\section{Introduction and Basics}
Viale and Wei\ss~\cite{VW2011} introduced and used the notion of an $\omega_1$-guessing model to reformulate the principle ${\rm ISP}(\omega_2)$ and to show, among other things, that
${\rm ISP}(\omega_2)$ follows form $\rm PFA$.
Cox and Krueger \cite{CK2017} introduced and studied  indestructible $\omega_1$-guessing sets of size $\omega_1$, i.e., the $\omega_1$-guessing sets which remains valid in  generic extensions by any $\omega_1$-preserving forcing. They formulated an analogous principle, denoted by ${\rm IGMP}(\omega_2)$, and showed that it follows from $\rm PFA$. Among other things, they  showed that ${\rm IGMP}(\omega_2)$ implies
the Suslin Hypothesis. More generally, they proved that
under $\rm IGMP(\omega_2)$, if $(T,<_T)$ is a nontrivial tree of height and size $\omega_1$, then the forcing notion $(T,\geq_T)$ collapses  $\omega_1$.
This theorem  establishes  a connection between indestructible $\omega_1$-guessing sets and the $\omega_1$-approximation property of forcing notions. 
In this short paper, we examine a close inspection of the connection between the indestructibility of $\omega_1$-guessing models and the $\omega_1$-approximation property of forcing notions. In particular, we shall show that under ${\rm GMP}(\omega_2)$, if $\mathbb P$ is an $\omega_1$-preserving forcing which is proper for  $\omega_1$-guessing models of size $\omega_1$, then $\mathbb P$ has the $\omega_1$-approximation property if and only if the guessing models are indestructible by $\mathbb P$.

\subsection*{Guessing models}

Throughout this paper, by the stationarity of a set $\mathcal S\subseteq\mathcal P_{\omega_2}(H_\theta)$, we shall mean that for every function $F:\mathcal P_{\omega}(H_\theta)\rightarrow \mathcal P_{\omega_2}(H_\theta)$, there  is $M\prec H_\theta$ in $S$ with $M\cap\omega_2\in\omega_2$ such that $M$ is closed under $F$.
We say a set $x$ is \emph{bounded} in a set or class $M$ if there exists $X\in M$ with $x\subseteq X$.

\begin{definition}[Viale-Wei\ss~\cite{VW2011}]
A set $M$  is called \textbf{$\bm{\omega_1}$-guessing}  if and only if the following are equivalent for every $x$ which is bounded in $M$.
\begin{enumerate}
\item $x$ is  $\omega_1$-approximated in $M$, i.e., for every countable $a\in M$, $a\cap x\in M$.
\item  $x$ is guessed in $M$, i.e., there  exists $x^*\in M$ with $x^*\cap M=x\cap M$. .

\end{enumerate}

\end{definition}

\begin{definition}[$\rm GMP(\omega_2)$]
$\rm GMP(\omega_2)$ states that
for every regular $\theta\geq\omega_2$, the set of $\omega_1$-guessing elementary submodels  of $H_\theta$ of size $\omega_1$ is stationary in $\mathcal P_{\omega_2}(H_\theta)$.
\end{definition}

\begin{definition}[Cox--Krueger \cite{CK2017}]\leavevmode
\begin{enumerate}

\item An $\omega_1$-guessing set  is said to be \textbf{indestructibly $\bm \omega_1$-guessing}  if it remain $\omega_1$-guessing  in any $\omega_1$-preserving forcing extension.

\item Let ${\rm IGMP}(\omega_2)$ state that for every  regular cardinal $\theta\geq\omega_2$, there exist stationarily many $M\in\mathcal P_{\omega_2}(H_\theta)$ such that $M$ is indestructibly $\omega_1$-guessing.
\end{enumerate}
\end{definition}

We shall use the following without mentioning.
\begin{fact}
Let $\theta\geq\omega_2$ be a cardinal.
Assume $M\prec H_\theta$ is $\omega_1$-guessing. Then $\omega_1\subseteq M$.
\end{fact}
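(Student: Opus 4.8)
The plan is to argue by contradiction, exploiting the fact that the real content of ``$M$ is $\omega_1$-guessing'' lies in the implication \emph{approximated $\Rightarrow$ guessed}: the converse is automatic, since any countable $a\in M$ satisfies $a\subseteq M$, so $a\cap x$ depends only on $x\cap M$. First I would record the routine elementarity facts: $\omega_1\in M$ and $\omega\subseteq M$, and that $\delta:=M\cap\omega_1$ is an ordinal (if $\alpha\in M\cap\omega_1$ then $\alpha$ is countable and lies in $M$, hence $\alpha\subseteq M$, so $M\cap\omega_1$ is downward closed). Assume toward a contradiction that $\delta<\omega_1$; then $\delta\notin M$, and $\delta$ must be a limit ordinal (a successor $\beta+1$ would force $\beta+1\in M$), so it has cofinality $\omega$.

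The crucial step is the choice of witness. A first guess such as $x=\delta$ itself fails, because $\delta$ \emph{is} guessed in $M$, namely by $\omega_1$ (as $\omega_1\cap M=\delta$); so I must pick a set whose \emph{every} guess pins down $\delta$. I would fix an increasing sequence $\langle\gamma_n:n<\omega\rangle$ cofinal in $\delta$ and put $\bar x:=\{\gamma_n:n<\omega\}$. Each $\gamma_n<\delta$ lies in $M$ (as $\delta\subseteq M$) and $\bar x\subseteq\omega_1\in M$, so $\bar x$ is bounded in $M$. I would then check that $\bar x$ is $\omega_1$-approximated: for countable $a\in M$ one has $a\cap\bar x=(a\cap\omega_1)\cap\bar x$ with $a\cap\omega_1\subseteq M\cap\omega_1=\delta$; moreover $\sup(a\cap\omega_1)<\delta$ (otherwise $\delta\in M$), so $a\cap\bar x=a\cap(\bar x\cap\gamma)$ for some $\gamma<\delta$ in $M$, and $\bar x\cap\gamma$ is a \emph{finite} subset of $M$, hence in $M$; thus $a\cap\bar x\in M$.

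By the guessing property there is $x^*\in M$ with $x^*\cap M=\bar x\cap M=\bar x$; since $\bar x\subseteq\omega_1$ this yields $x^*\cap\delta=\bar x$, and replacing $x^*$ by $x^*\cap\omega_1\in M$ I may assume $x^*\subseteq\omega_1$. The payoff is that $\delta$ is now definable from $x^*$: set $P:=\{\xi\in x^*:x^*\cap\xi\text{ is finite}\}$. Every $\gamma_n$ has exactly $n$ predecessors in $x^*$ (because $x^*\cap\delta=\bar x$), so $\gamma_n\in P$, whereas any $\xi\in x^*$ with $\xi\ge\delta$ has all of $\bar x$ below it and hence infinitely many predecessors; thus $P=\bar x$ and $\sup P=\delta$. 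As $P$ and $\sup P$ are definable from $x^*\in M$ and $M\prec H_\theta$, we get $\delta\in M$, contradicting $\delta\notin M$. Therefore $\delta=\omega_1$, i.e.\ $\omega_1\subseteq M$.

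I expect the main obstacle to be exactly the selection of the witness $\bar x$: the guessing property can only be violated by an approximated-but-not-guessed set, and the most natural candidates (the ordinal $\delta$, or any set whose trace on $\delta$ is an initial segment) turn out to be guessed. The idea that unlocks the argument is that a cofinal $\omega$-sequence in $\delta$ is approximated, because its proper initial segments are finite and therefore in $M$, yet any guess for it must, through the order structure of $x^*$, definably reconstruct $\delta$ and so place it inside $M$. Once the witness is fixed, verifying approximation and the definability of $\delta$ from $x^*$ is routine.
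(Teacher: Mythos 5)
Your proof is correct. The paper does not actually prove this fact---it simply cites \cite[Lemma 2.3]{CK2017}---and your argument is essentially the standard one behind that citation: the range of a cofinal $\omega$-sequence in $\delta=M\cap\omega_1$ is $\omega_1$-approximated in $M$ (its proper initial segments are finite), yet any guess $x^*\in M$ lets $M$ recover $\delta$ from the order structure of $x^*$ (your set $P$ of points with finitely many $x^*$-predecessors plays the same role as counting predecessors along the increasing enumeration of $x^*$ in the usual proof), contradicting $\delta\notin M$.
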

\begin{proof}
See \cite[Lemma 2.3]{CK2017}
\end{proof}

\subsection*{Generalised Proper Forcing}
Let $\mathbb P$ be a forcing. Assume that $M\prec H_\theta$ with $\mathbb P,\mathcal P(\mathbb P)\in M$.
 A condition $p\in\mathbb P$ is $(M,\mathbb P)$-generic, if for every dense set $D\subseteq\mathbb P$ which belongs to $M$, $M\cap D$ is pre-dense below $p$.
The proof of the following is standard.

  \begin{lemma}\label{gen-lemma}
  Suppose that $\mathbb P$ is a forcing. Assume that $M\prec H_\theta$ with $\mathbb P,\mathcal P(\mathbb P)\in M$. Let $p\in\mathbb P$. Then $p$ is $(M,\mathbb P)$-generic if and only if  $p\Vdash ``M[\dot{G}]\cap H_\theta^V=M"$.
\end{lemma}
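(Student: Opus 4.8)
The plan is to prove the two directions separately, after recording the trivial inclusion. Observe first that $M\subseteq M[\dot G]\cap H_\theta^V$ is forced outright: for any $a\in M$ the canonical check-name $\check a$ is definable in $H_\theta$ from $a$, so $\check a\in M$ by elementarity, and $\check a^{\dot G}=a\in M\subseteq H_\theta^V$. Hence the entire content of the equivalence lies in the reverse inclusion $M[\dot G]\cap H_\theta^V\subseteq M$, and I will phrase both implications in terms of it.

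For the forward direction, assume $p$ is $(M,\mathbb P)$-generic and fix a $\mathbb P$-name $\dot\tau\in M$; the goal is to show $p\Vdash(\dot\tau^{\dot G}\in H_\theta^V\to\dot\tau^{\dot G}\in M)$. Consider the set $D$ of conditions $r$ which either force $\dot\tau=\check x$ for some specific $x\in H_\theta^V$, or force $\dot\tau\notin H_\theta^V$. This $D$ is dense, and it is definable in $H_\theta$ from the parameters $\dot\tau$ and $\mathbb P$, so $D\in M$ by elementarity. Now let $G$ be generic with $p\in G$. By genericity $M\cap D$ is predense below $p$, so $G$ meets $M\cap D$, say at $r$. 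If $\dot\tau^{G}\in H_\theta^V$, then $r$ cannot force $\dot\tau\notin H_\theta^V$, so $r$ forces $\dot\tau=\check x$ for some $x\in H_\theta^V$; the existence of such an $x$ is a statement about the parameters $r,\dot\tau\in M$, so by elementarity a witness $x$ may be taken in $M$, and then $\dot\tau^{G}=x\in M$. This establishes the forced reverse inclusion.

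For the converse I argue contrapositively. Suppose $p$ fails to be $(M,\mathbb P)$-generic, as witnessed by a dense set $D\in M$ and a condition $q\le p$ that is incompatible with every element of $M\cap D$. Fix in $M$ a wellordering $<^*$ of $D$ and let $\dot d\in M$ be the canonical name for the $<^*$-least element of $D\cap\dot G$, so that $\Vdash\dot d\in\check D$. Let $G$ be generic with $q\in G$. Then $\dot d^{G}\in D\cap G$ is compatible with $q$, hence $\dot d^{G}\notin M\cap D$, and since $\dot d^{G}\in D$ this forces $\dot d^{G}\notin M$. On the other hand $\dot d^{G}\in M[\dot G]$ because $\dot d\in M$, and $\dot d^{G}\in\mathbb P\subseteq H_\theta^V$. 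Thus $q\Vdash M[\dot G]\cap H_\theta^V\neq M$, so $p$ does not force the equality, completing the contrapositive.

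The density and elementarity claims are routine. The one point that requires care is in the converse: one must produce a single witness lying outside $M$ rather than a set of conditions, since a set belonging to $M$ may perfectly well have members outside $M$. This is exactly why I pick the $<^*$-least element of $D\cap\dot G$ instead of working with the set $D\cap\dot G$ itself, and I expect this to be the main subtlety of the argument.
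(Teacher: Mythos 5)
Your proof is correct, and since the paper explicitly omits this argument (``The proof of the following is standard'' with an empty proof environment), yours is precisely the standard argument being alluded to: the check-name observation reduces everything to the inclusion $M[\dot G]\cap H_\theta^V\subseteq M$, the forward direction goes through the dense set of conditions deciding $\dot\tau$ together with elementarity of $M$ (using the routine fact that the forcing relation for such statements is definable over $H_\theta$, which is where the hypothesis $\mathcal P(\mathbb P)\in M$ earns its keep), and the converse correctly extracts a single witness outside $M$ via the name for the $<^*$-least element of $D\cap\dot G$. You rightly identify that last step as the one genuine subtlety, and your handling of it is exactly right.
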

  \begin{proof}[\nopunct]  
  \end{proof}

  Let $\theta$ be a sufficiently large regular cardinal.
  A forcing $\mathbb P$ is said to be \textbf{proper for} $\bm{ \mathcal S}$, where $\mathcal S\subseteq\mathcal P_{\omega_2}(H_\theta)$ consists of elementary submodels of $(H_\theta,\in,\mathbb P)$,   if  for every $M\in \mathcal S$ and every $p\in M\cap\mathbb P$, there is an $(M,\mathbb P)$-generic condition $q\leq p$. A forcing is said to be \textbf{proper for models of size} $\bm{\omega_1}$, if for every sufficiently large regular cardinal $\theta$, $\mathbb P$ is proper for 
$
\{M\prec (H_\theta,\in,\mathbb P): \omega_1\subseteq M \mbox{ and } |M|=\omega_1\}
$.
 It is  easy to see that every forcing which is proper for a stationary set $\mathcal S\subseteq\mathcal P_{\omega_2}(H_\theta)$  preserves $\omega_2$.

  \begin{lemma}\label{proper-lemma}
  Suppose that $\mathbb P$ is  proper for a stationary set $\mathcal S\subseteq\mathcal P_{\omega_2}(H_\theta)$. Then $\mathbb P$ preserves the stationarity of $\mathcal S$. 
  \end{lemma}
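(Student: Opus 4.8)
The plan is to verify stationarity in the extension by a density argument. Fix $p\in\mathbb P$ and a $\mathbb P$-name $\dot F$ with $p\Vdash\dot F\colon\mathcal P_\omega(\check H_\theta)\to\mathcal P_{\omega_2}(\check H_\theta)$; it suffices to produce $q\le p$ and $M\in\mathcal S$ such that $q\Vdash$ ``$M\cap\omega_2\in\omega_2$ and $M$ is closed under $\dot F$''. Indeed, once such a pair is found below an arbitrary condition, the set of conditions forcing some $M\in\mathcal S$ to be closed under $\dot F$ is dense, so in any extension $V[G]$ the actual function $\dot F^G$ admits a closed $M\in\mathcal S$; as every function $F\colon\mathcal P_\omega(H_\theta)\to\mathcal P_{\omega_2}(H_\theta)$ of $V[G]$ has a name, this says exactly that $\mathcal S$ remains stationary.

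First I would lift the stationarity of $\mathcal S$ to a higher level. Fix a regular $\chi>\theta$ with $\mathbb P,\mathcal P(\mathbb P),\dot F,\mathcal S,\theta,p\in H_\chi$ and fix Skolem functions for $(H_\chi,\in,\mathbb P,\dot F,\mathcal S,\theta,p)$. Applying the stationarity of $\mathcal S$ to the function $a\mapsto \mathrm{Sk}(a\cup\{\mathbb P,\dot F,\mathcal S,\theta,p\})\cap H_\theta$ on $\mathcal P_\omega(H_\theta)$, I obtain $M\in\mathcal S$ with $M\cap\omega_2\in\omega_2$ that is closed under it. Setting $N:=\mathrm{Sk}(M\cup\{\mathbb P,\dot F,\mathcal S,\theta,p\})$, closure (applied to finite subsets of $M$, which are elements of $M$) forces $N\cap H_\theta=M$, while $\omega_1\subseteq M\subseteq N$, $|N|=\omega_1$, all parameters lie in $N$, and $p\in N\cap H_\theta=M$. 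This reflection, matching the countable-domain functions of the $\mathcal P_{\omega_2}$-stationarity with the finitary Skolem functions of $H_\chi$ so as to pin down $N\cap H_\theta=M\in\mathcal S$, is the step I expect to be the main obstacle.

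Now by properness of $\mathbb P$ for $\mathcal S$ there is an $(M,\mathbb P)$-generic $q\le p$. Since $\mathbb P\in H_\theta$ and $M=N\cap H_\theta$, every dense $D\subseteq\mathbb P$ in $N$ already lies in $M$ (as $D\in H_\theta$) and satisfies $N\cap D=M\cap D$; hence $q$ is also $(N,\mathbb P)$-generic, and by \cref{gen-lemma}, $q\Vdash N[\dot G]\cap H_\chi^V=N$, so in particular $q\Vdash N[\dot G]\cap H_\theta^V=M$. To finish, I would argue closure in $V[G]$ for any $G\ni q$: since $N\prec H_\chi$ with $\mathbb P\in N$, we get $N[G]\prec H_\chi^{V[G]}$ and $F:=\dot F^G\in N[G]$. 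Given a countable $a\in M\subseteq N[G]$, elementarity gives $F(a)\in N[G]$, and since $|F(a)|\le\omega_1\subseteq N[G]$ there is in $N[G]$ a surjection $\omega_1\to F(a)$, so $F(a)\subseteq N[G]\cap H_\theta^V=M$. Thus $q\Vdash$ ``$M$ is closed under $\dot F$'', and as $M\cap\omega_2\in\omega_2$ is preserved, $M$ is the required witness. Here I would use that $\mathbb P$ preserves $\omega_1$ and $\omega_2$ (the latter by the remark preceding the lemma), which is what makes $|F(a)|\le\omega_1$ and $\omega_1\subseteq N[G]$ legitimate.
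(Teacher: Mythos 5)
Your proof is correct and takes essentially the same route as the paper's: lift the stationarity of $\mathcal S$ to a larger $H_\chi$ to obtain $M = N\cap H_\theta\in\mathcal S$ with $p,\dot F\in N$, take an $(M,\mathbb P)$-generic $q\le p$, and use \cref{gen-lemma} to conclude in the extension that $M$ is closed under $\dot F^{G}$. The only difference is level of detail: you spell out the Skolem-hull reflection and the transfer of genericity from $M$ to $N$ (steps the paper treats as immediate from its definition of stationarity, working with $M^*\prec H_{\theta^*}$ in place of your $N\prec H_\chi$), and your explicit appeal to $\omega_1$-preservation matches what the paper uses implicitly via ``$\omega_1\subseteq M$'' in the setting where the lemma is applied.
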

\begin{proof}
Assume that $p\in \mathbb P$ forces that $ ``\dot{F}:\mathcal P_{\omega}(H_\theta^V)\rightarrow \mathcal P_{\omega_2}(H_\theta^V) \mbox{ is a function}"$. 
Pick a sufficiently large regular cardinal $\theta^*>\theta$ with $\dot{F}\in H_{\theta^*}$.
Pick $M^*\prec H_{\theta^*}$ with $\omega_1\cup\{H_\theta,\dot{F},p\}\subseteq M^*$ and $M\coloneqq M^*\cap H_\theta\in \mathcal S$. Such a model exists by our assumption on the stationarity of $\mathcal S$.
Since $\mathbb P$ is proper for $\mathcal S$, we can extend
$p$ to  an $(M,\mathbb P)$-generic condition $q$.
Assume that $G\subseteq\mathbb P$ is a $V$-generic filter with $q\in G$.
Now in $V[G]$, $M[G]$ is closed under $F$, as $\omega_1\subseteq M$. By \cref{gen-lemma}, $M[G]\cap H_\theta^V=M$, and hence $M$ is closed under $F$.
Thus $q$ forces that $\check{M}$ is closed under $\dot{F}$. Since $p$ was arbitrary, the maximal condition forces that $\mathcal S$ is stationary.

\end{proof}

Let us recall the definition of  the $\omega_1$-approximation property of a forcing notion.
\begin{definition}[Hamkins \cite{Hamkins2001}]
A forcing notion $\mathbb P$ has the 
$\omega_1$-approximation property in $V$ if for every $V$-generic filter $G\subseteq\mathbb P$, and for every $x\in V[G]$ which is bounded in $V$ so that for every countable $a\in V$, $a\cap x\in V$, then $x\in V$.
\end{definition}

\section{IGMP and the Approximation Property}\label{secmain}

\begin{lemma}\label{Lemma0}
Suppose that  $\mathbb P$ has the $\omega_1$-approximation property. Assume that $M\prec H_\theta$ is $\omega_1$-guessing, for some $\theta\geq\omega_2$. Then $\mathbb P$ forces  $M$ to  be $\omega_1$-guessing.
\end{lemma}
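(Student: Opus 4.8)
The plan is to verify the two clauses of the definition of $\omega_1$-guessing directly in the extension $V[G]$. Before anything else I would check that $\mathbb P$ preserves $\omega_1$: if some $V[G]$-countable $z\subseteq\omega_1^V$ were cofinal in $\omega_1^V$, then every $V$-countable $a$ would meet $z$ in a finite (hence ground-model) set, so $z$ would be $\omega_1$-approximated and bounded in $V$; the $\omega_1$-approximation property would then force $z\in V$, contradicting $\mathrm{cf}^V(\omega_1^V)=\omega_1$. With $\omega_1$ preserved, ``countable'' is absolute between $V$ and $V[G]$ for the relevant sets, and in particular every $a\in M$ that is countable in $V[G]$ is countable in $V$ and hence satisfies $a\subseteq M$ (using $\omega_1\subseteq M$ and elementarity). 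This already settles the implication (2)$\Rightarrow$(1) in $V[G]$: if $x^*\in M$ guesses $x$, then for countable $a\in M$ we have $a\cap x=a\cap(x^*\cap M)=a\cap x^*\in M$.

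So fix $x\in V[G]$ bounded in $M$, say $x\subseteq X\in M$, and $\omega_1$-approximated in $M$ in $V[G]$; I must produce $x^*\in M$ with $x^*\cap M=x\cap M$. The strategy is to first show $x\cap M\in V$ and then invoke that $M$ is $\omega_1$-guessing in the ground model. Since $x\cap M\subseteq X\in V$, the $\omega_1$-approximation property will give $x\cap M\in V$ as soon as I check that $a\cap(x\cap M)\in V$ for every $V$-countable $a$; and once $x\cap M\in V$, for each countable $c\in M$ we get $c\cap(x\cap M)=c\cap x\in M$, so $x\cap M$ is $\omega_1$-approximated and bounded in $M$ in $V$, whence the guessing property of $M$ in $V$ yields the required $x^*\in M$ with $x^*\cap M=(x\cap M)\cap M=x\cap M$.

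The heart of the matter is the verification that $a\cap(x\cap M)\in V$ for $V$-countable $a$. Writing $b=a\cap M\cap X$, this amounts to $b\cap x\in V$ for an arbitrary $V$-countable $b\subseteq M\cap X$. The approximation hypothesis on $x$ only controls $c\cap x$ for $c$ that are genuinely \emph{elements} of $M$ (e.g.\ singletons, which decide pointwise whether each $m\in M$ lies in $x$), whereas $b$ need not belong to $M$; bridging this gap is the main obstacle, since pointwise ground-model information does not by itself place the selected subset in $V$. I would close it using the fact that an $\omega_1$-guessing model $M$ (of size $\omega_1$, with $\omega_1\subseteq M$) is internally unbounded, i.e.\ every countable subset of $M$ is contained in a countable element of $M$. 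Granting this, pick a countable $c\in M$ with $b\subseteq c$; then $c\cap x\in M\subseteq V$ and $b\in V$ yield $b\cap x=b\cap(c\cap x)\in V$, as desired.

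Finally, the internal unboundedness is itself where the guessing property in $V$ does the real work, so I would either cite it or sketch it as follows. Reducing to subsets of an ordinal $\lambda\in M$ via a bijection in $M$, one first shows $\mathrm{cf}\bigl(\sup(M\cap\lambda)\bigr)=\omega_1$ for regular $\lambda\in M$: a hypothetical countable cofinal $z\subseteq M\cap\lambda$ \emph{is} $\omega_1$-approximated in $M$, because every countable $c\in M$ is bounded below $\sup(M\cap\lambda)$, making $c\cap z$ finite and hence an element of $M$; but any guess $z^*\in M$ for $z$ is then impossible, since a cardinality count using $\omega_1\subseteq M$ shows $z^*\cap M$ cannot be the countable set $z$. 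An induction on $\lambda$ promotes the resulting boundedness of countable subsets of $M\cap\lambda$ to genuine countable covers inside $M$, completing the argument.
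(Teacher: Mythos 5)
Your proof is correct and takes essentially the same route as the paper's: you reduce to showing $x\cap M\in V$ via the $\omega_1$-approximation property, verify the countable approximations $a\cap(x\cap M)\in V$ by covering $a\cap M\cap X$ with a countable element of $M$ --- exactly the internal unboundedness of $\omega_1$-guessing models that the paper invokes as \cite[Theorem 1.4]{Krueger2020} --- and then apply the guessing property of $M$ in $V$. The only differences are that you spell out routine points the paper leaves implicit (that the approximation property preserves $\omega_1$, and the guessed-implies-approximated direction in $V[G]$) and sketch Krueger's covering lemma rather than merely citing it.
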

\begin{proof}
Let $G\subseteq\mathbb P$ be a $V$-generic filter.
Fix  $x\in V[G]$ and assume that $x\subseteq X\in M$ is
$\omega_1$-approximated in $M$. We claim that $x\cap M$ is $\omega_1$-approximated in $V$, which in turn implies that $x\cap M\in V$. Then, since $M$ is $\omega_1$-guessing in $V$, $x$ is guessed in $M$. 
To see that $x\cap M$ is $\omega_1$-approximated in $V$, fix a countable set $a\in V$. By  \cite[Theorem 1.4]{Krueger2020},
 there is a countable set $b\in M$ with $a\cap M\cap X\subseteq b$. Thus
 $a\cap x\cap M=a\cap x\cap b\in V$, since $a\in V$ and $x\cap b\in M\subseteq V$.

\end{proof}

\begin{definition}
For an $\omega_1$-preserving forcing notion $\mathbb P$, we let  $\mathbb P{-\rm IGMP}(\omega_2)$   states that for every sufficiently large regular $\theta$, the set of $\omega_1$-guessing sets of size $\omega_1$ which remain $\omega_1$-guessing after forcing with $\mathbb P$, is stationary in $\mathcal P_{\omega_2}(H_\theta)$.
\end{definition}
 It is clear that ${\rm IGMP}(\omega_2)$ implies that  $\mathbb P-{\rm IGMP}(\omega_2)$ holds, for all $\omega_1$-preserving forcing $\mathbb P$. Note that ${\rm IGMP}(\omega_2)$ is a diagonal version of the statement that,  for every $\omega_1$-preserving forcing $\mathbb P$, $\mathbb P-{\rm IGMP}(\omega_2)$ holds. It is also worth mentioning that the ${\rm IGMP}(\omega_2)$ obtained by Cox and Kruger has the property that every indestructible $\omega_1$-guessing model remains $\omega_1$-guessing in any outer transitive extension with the same $\omega_1$.

\begin{proposition}\label{Lemma1}
   Assume that $\mathbb P$ is an $\omega_1$-preserving forcing. Suppose that for every sufficiently large regular cardinal $\theta$, $\mathbb P$ is proper for a stationary set $\mathfrak{G}_\theta\subseteq\mathcal P_{\omega_2}(H_\theta)$ of $\omega_1$-guessing elementary submodels of $H_\theta$. Then the following are equivalent.
\begin{enumerate}
\item $\mathbb P$ has the $\omega_1$-approximation property.
\item  Every $\omega_1$-guessing model is indestructible by $\mathbb P$.

\end{enumerate}

\end{proposition}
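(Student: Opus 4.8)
The plan is to prove the two implications separately, with the forward direction essentially free and the converse carrying all the weight.

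For $(1)\Rightarrow(2)$ I would appeal directly to \cref{Lemma0}. If $\mathbb P$ has the $\omega_1$-approximation property and $M\prec H_\theta$ is $\omega_1$-guessing, then \cref{Lemma0} already says that $\mathbb P$ forces $M$ to remain $\omega_1$-guessing; since being destroyed by $\mathbb P$ means failing to remain $\omega_1$-guessing in the $\mathbb P$-extension, every $\omega_1$-guessing model is indestructible by $\mathbb P$. Note that this direction uses nothing beyond the hypotheses of \cref{Lemma0}, so the properness assumption is not even needed here.

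For $(2)\Rightarrow(1)$ I would argue by contraposition: assuming $\mathbb P$ fails the $\omega_1$-approximation property, I will produce an $\omega_1$-guessing model that $\mathbb P$ destroys. First reduce to subsets of an ordinal, since composing an $\omega_1$-approximated counterexample with a ground-model bijection yields a counterexample $x\subseteq\kappa$ for some ordinal $\kappa$. The decisive move is to reflect a \emph{name} for such a counterexample into a guessing model rather than to work with a realized set. Fix $\theta$ large and observe that the statement ``there are $p\in\mathbb P$, an ordinal $\kappa$ and a $\mathbb P$-name $\dot x$ with $p\Vdash\dot x\subseteq\check\kappa$, with $\dot x$ forced to be $\omega_1$-approximated in $V$, and with $p\Vdash\dot x\neq\check z$ for every $z$'' holds in $H_\theta$. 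Choosing $M\in\mathfrak{G}_\theta$ with $\mathbb P,\mathcal P(\mathbb P)\in M$ and applying elementarity, I obtain such $\dot x,p,\kappa\in M$. By properness of $\mathbb P$ for $\mathfrak{G}_\theta$, extend $p$ to an $(M,\mathbb P)$-generic $q$, and let $G\ni q$ be $V$-generic; write $x=\dot x^G$.

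Now I claim $y:=x\cap M$ witnesses that $M$ is no longer $\omega_1$-guessing in $V[G]$. Boundedness is clear, as $y\subseteq\kappa\in M$. For the $\omega_1$-approximation of $y$ in $M$, take a countable $a\in M$ (necessarily in $V$, by $\omega_1$-preservation); since $\omega_1\subseteq M$ one has $a\subseteq M$, so $a\cap y=a\cap x$, and $a\cap x\in V$ by the $\omega_1$-approximation of $x$ in $V$, while $a\cap x\in M[G]$ because $a,x\in M[G]$ (here $\dot x\in M$ is used); by \cref{gen-lemma}, $M[G]\cap H_\theta^V=M$, whence $a\cap y=a\cap x\in M$. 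This is the routine computation. The crucial and delicate point is that $y$ is not guessed in $M$: letting $\pi\colon M\cong\bar M$ be the transitive collapse, $y$ is guessed in $M$ exactly when $\pi[x\cap M]\in\bar M$. I would identify $\pi[x\cap M]$ with $\pi(\dot x)^{\bar G}$, where $\bar G:=\pi[G\cap M]$ is the $\bar{\mathbb P}$-generic over $\bar M$ induced by the $(M,\mathbb P)$-genericity of $q$; applying $\pi$ to the elementary statement above yields $\pi(p)\Vdash^{\bar M}_{\bar{\mathbb P}}\pi(\dot x)\notin\check{\bar M}$, and since $\pi(p)\in\bar G$ this forces $\pi(\dot x)^{\bar G}\notin\bar M$. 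Hence $y$ is approximated but not guessed in $M$ in $V[G]$, so $M$ is destroyed, contradicting $(2)$.

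The main obstacle is precisely this non-guessing step. The naive attempt, namely to show that the realized set $x\cap M$ escapes the ground model, is doomed: a minimal counterexample forces $x\cap M\in V$ for every guessing $M$, since otherwise collapsing $x\cap M$ would produce a counterexample below $\kappa$, so one can never conclude $x\cap M\notin V$. The resolution I would emphasize is that destruction is governed by newness over the collapse $\bar M$, not over $V$, and that this newness is guaranteed only when the counterexample is reflected into $M$ as a name: the forcing relation ``$\dot x$ is not in the ground model'' transfers under $\pi$ to ``$\pi(\dot x)$ is not in $\bar M$'', and the induced generic $\bar G$ then realizes a subset of $\bar\kappa$ that is approximated by $\bar M$ yet lies outside it. Verifying the standard facts that $\bar G$ is $\bar{\mathbb P}$-generic over $\bar M$ and that $\pi(\dot x)^{\bar G}=\pi[x\cap M]$ is the remaining bookkeeping.
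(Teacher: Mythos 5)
Your proof is correct, but for the substantive direction $(2)\Rightarrow(1)$ you take a genuinely different route from the paper. The paper argues directly rather than by contraposition: given a name $\dot A$ forced to be countably approximated and a generic $G$, it uses \cref{proper-lemma} to obtain, in $V[G]$, a \emph{stationary} set $\mathcal S\subseteq\mathfrak G_\theta$ of models $M$ with $\dot A\in M$ and $M[G]\cap H_\theta^V=M$; it checks (much as you do, via the dense sets of deciding conditions and elementarity of $M[G]$) that $A=\dot A^G$ is countably approximated in each such $M$, invokes indestructibility to obtain a guess $A^*_M\in M$ for every $M\in\mathcal S$, and then applies a pressing-down (Fodor-type) argument to the regressive map $M\mapsto A^*_M$ to stabilize a single $A^*$ on a stationary $\mathcal S^*$; since $A\subseteq\bigcup\mathcal S^*$, this forces $A^*=A\in V$. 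You instead reflect a \emph{name} for a counterexample into a single model $M\in\mathfrak G_\theta$, take one $(M,\mathbb P)$-generic condition, and show that this one $M$ is destroyed via the transitive collapse: $\pi[x\cap M]=\pi(\dot x)^{\bar G}$ is approximated in $M$ (by \cref{gen-lemma}) but cannot be guessed, because collapsing the statement ``$p\Vdash\dot x\neq\check z$ for all $z$'' yields that $\pi(\dot x)^{\bar G}\neq\bar z$ for every $\bar z\in\bar M$, using that $\pi(p)\in\bar G$ and the truth lemma over the transitive ${\rm ZFC}^-$ model $\bar M$. What your approach buys: it needs only one model and one generic condition, so it dispenses entirely with the preservation-of-stationarity lemma and the Fodor argument in the extension, and it isolates the true mechanism of destruction---newness over $\bar M$ rather than over $V$---which, as you rightly observe, the naive strategy of showing $x\cap M\notin V$ cannot deliver. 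What the paper's approach buys: it works entirely with realized sets in $V[G]$ and ordinary elementarity, avoiding the collapse bookkeeping your argument must discharge (genericity of $\bar G=\pi[G\cap M]$ over $\bar M$, the identity $\pi(\dot x)^{\bar G}=\pi[x\cap M]$, and absoluteness of the forcing relation between $V$ and $H_\theta$); those items are indeed standard and, once verified, your proof is complete. Your $(1)\Rightarrow(2)$ coincides with the paper's, being a direct appeal to \cref{Lemma0}, and your remark that properness plays no role in that direction is accurate.
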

\begin{proof}
Observe that the implication $1.\Rightarrow 2.$
 follows from \cref{Lemma0}. To see that the implication $2.\Rightarrow 1.$ holds true,
 fix  an $\omega_1$-preserving forcing $\mathbb P$ and assume that the maximal condition of $\mathbb P$ forces $\dot A$ is a countably approximated subset of an ordinal $\gamma$. Pick a regular $\theta$, with $\gamma,\dot{A},\mathcal P(\mathbb P)\in H_\theta$.
   Assume that $\mathfrak G\coloneqq\mathfrak{G}_\theta\subseteq\mathcal P_{\omega_2}(H_\theta)$ is a stationary set of $\omega_1$-guessing elementary submodels of $H_\theta$  for which $\mathbb P$ is proper.
  We shall show that $\mathbb P\Vdash ``\dot{A}\in V"$. Let $G\subseteq\mathbb P$ be a $V$-generic filter, and
 set 
 \[
 \mathcal S\coloneqq\{M\in\mathfrak G: p,\gamma,\dot A,\mathbb P\in M \mbox{ and } M[G]\cap H_\theta^V=M \}.
 \]
In $V[G]$, $\mathcal S$ is stationary in $\mathcal P_{\omega_2}(H_\theta^V)$. To see this, let $F:\mathcal P_{\omega}(H_\theta^V)\rightarrow \mathcal P_{\omega_2}(H_\theta^V)$ be defined by $F(x)=\{\dot{y}^G\}$ if $x=\{\dot{y}\}$ for some $\mathbb P$-name $\dot{y}$  with $\dot{y^G}\in H_\theta^V$, and otherwise let $F(x)=\{p,\gamma,\dot{A},\mathbb P\}$. By \cref{proper-lemma}, the set of models in $\mathfrak G$ which are closed under $F$ is stationary. Observe that
a model $M\in\mathfrak{G}$ is closed under $F$ if and only if $M\in\mathcal S$.

Let  $A=\dot{A}^G$ and fix $M\in\mathcal S$.
We claim that $A$ is countably approximated in $M$. Let $a\in M$ be a countable subset of $\gamma$. 
Let $D_a$ be the set of conditions deciding $\dot A \cap a$. Then $D_a$ belongs to $M$ and is dense in $\mathbb P$, as the maximal condition forces that $\dot{A}$ is countably approximated in $V$. By the elementarity of $M[G]$ in $H_\theta[G]$,
there is $p\in G\cap D_a\cap M[G]$. But then $p\in M$, as $D_a\in H_\theta^V$.
Working in $V$,  the elementarity of $M$ in $H_\theta$ implies that there is some $b\in M$ such that, $p\Vdash`` \check b=\dot A\cap a"$. Since $p\in G$, we have 
 $A\cap a=b\in M$. Thus $A$ is countably approximated in $M$.  By our assumption, $M$ is an $\omega_1$-guessing set in $V[G]$. Thus there is $A^*$ in $M$, and hence in $V$,
such that $A^*\cap M=A\cap M$. 

 Working  in $V[G]$ again, for every $M\in\mathcal S$, there is, by the previous paragraph, a set $A^*_M\in M$ such that $A^*_M\cap M=A\cap M$.
This defines a regressive function $M\mapsto A^*_M$  on $\mathcal S$. As $\mathcal S$ is stationary in $H^V_\theta$, there are a set $A^*\in H^V_\theta$ and
 a stationary set $\mathcal S^*\subseteq\mathcal S$ such that for every $M\in\mathcal S^*$, we have
  $A^*\cap M=A\cap M$. Since $A\subseteq\bigcup\mathcal S^*$, we have $A^* =A$, which in turn implies that $A\in V$.

\end{proof}

\begin{corollary}\label{cor1.2}
Assume $\rm GMP(\omega_2)$. Suppose that $\mathbb P$ is an $\omega_1$-preserving forcing which is also proper for models of size $\omega_1$.  Then the following are equivalent.
\begin{enumerate}
\item  $\mathbb P{-\rm IGMP}(\omega_2)$ holds.
\item $\mathbb P$ has the $\omega_1$-approximation property.
\end{enumerate}

\end{corollary}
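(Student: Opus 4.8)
The plan is to derive both implications from the machinery already in place: $\rm GMP(\omega_2)$ supplies the stationary sets of $\omega_1$-guessing models, the hypothesis that $\mathbb P$ is proper for models of size $\omega_1$ lets these sets play the role of $\mathfrak G_\theta$ in \cref{Lemma1}, and \cref{Lemma0} handles survival of guessing models. The first thing I would record is a bookkeeping remark: since $\mathbb P$ is proper for \emph{every} $M\prec(H_\theta,\in,\mathbb P)$ of size $\omega_1$ with $\omega_1\subseteq M$, and properness for a family is a condition verified model by model, $\mathbb P$ is automatically proper for every stationary subfamily of such models. In particular it is proper both for the stationary set of $\omega_1$-guessing models of size $\omega_1$ given by $\rm GMP(\omega_2)$ and for the stationary set witnessing $\mathbb P{-\rm IGMP}(\omega_2)$. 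Intersecting with the club of models that are elementary in $(H_\theta,\in,\mathbb P)$ and contain the finitely many relevant parameters preserves stationarity, so I may freely assume the witnessing models are as elementary and parameter-rich as the later arguments require.

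For the implication $(2)\Rightarrow(1)$ of the corollary, I would argue straight from \cref{Lemma0}. If $\mathbb P$ has the $\omega_1$-approximation property, then \cref{Lemma0} shows that \emph{every} $\omega_1$-guessing $M\prec H_\theta$ remains $\omega_1$-guessing after forcing with $\mathbb P$. Since $\rm GMP(\omega_2)$ makes the $\omega_1$-guessing elementary submodels of size $\omega_1$ stationary in $\mathcal P_{\omega_2}(H_\theta)$, the set of $\omega_1$-guessing models of size $\omega_1$ which survive $\mathbb P$ contains this stationary set, hence is itself stationary. This is precisely $\mathbb P{-\rm IGMP}(\omega_2)$.

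For the implication $(1)\Rightarrow(2)$, I would take the stationary set $\mathfrak H_\theta$ of $\omega_1$-guessing models of size $\omega_1$ that remain $\omega_1$-guessing after $\mathbb P$, supplied by $\mathbb P{-\rm IGMP}(\omega_2)$, and use it as the stationary witness $\mathfrak G_\theta$ in the hypothesis of \cref{Lemma1}. By the opening remark $\mathbb P$ is proper for $\mathfrak H_\theta$, so that hypothesis is met. I would then reproduce the argument inside \cref{Lemma1} that derives the $\omega_1$-approximation property from the survival of the fixed guessing models, with $\mathfrak G_\theta:=\mathfrak H_\theta$. Concretely: given a name $\dot A$ for a countably approximated subset of an ordinal $\gamma$ and a generic $G$, the set $\mathcal S$ constructed there is a stationary subset of $\mathfrak H_\theta$; hence each $M\in\mathcal S$ remains $\omega_1$-guessing in $V[G]$ by the defining property of $\mathfrak H_\theta$, which is exactly the only point at which the proof invokes indestructibility. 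The pressing-down step then forces $A\in V$, and since $\dot A$ was an arbitrary name for a countably approximated subset of an ordinal, $\mathbb P$ has the $\omega_1$-approximation property.

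The one obstacle is not conceptual but a mismatch of quantifiers: \cref{Lemma1} as stated equates the approximation property with the indestructibility of \emph{every} $\omega_1$-guessing model, whereas $\mathbb P{-\rm IGMP}(\omega_2)$ only guarantees that \emph{stationarily many} guessing models survive. The resolution is the locality of the proof of \cref{Lemma1}: the derivation of the approximation property only ever uses the survival of the models lying in the single stationary witness fixed at the outset, so substituting the $\mathbb P{-\rm IGMP}(\omega_2)$ stationary set is enough. I would therefore either invoke this locality explicitly, or, more cleanly, phrase \cref{Lemma1} with its second clause weakened to ``the models of $\mathfrak G_\theta$ are indestructible by $\mathbb P$'', after which the corollary reduces to a direct instantiation together with the $\rm GMP(\omega_2)$ genericity of the witnesses.
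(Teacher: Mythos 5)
Your proof is correct and is precisely the argument the paper intends: the paper gives this corollary an empty proof, treating it as immediate from \cref{Lemma0} (which, together with the stationary set of size-$\omega_1$ guessing models supplied by $\rm GMP(\omega_2)$, yields the direction from the approximation property to $\mathbb P{-\rm IGMP}(\omega_2)$) and from \cref{Lemma1} instantiated with the $\mathbb P{-\rm IGMP}(\omega_2)$ witness as $\mathfrak{G}_\theta$ (for the converse), with your opening bookkeeping remark about properness passing to subfamilies handling the hypothesis of \cref{Lemma1}. Your final paragraph identifies the one genuine subtlety the paper glosses over---clause 2 of \cref{Lemma1} quantifies over \emph{all} $\omega_1$-guessing models while $\mathbb P{-\rm IGMP}(\omega_2)$ only provides stationarily many, so a literal citation does not suffice---and your resolution via the locality of the proof of \cref{Lemma1} (equivalently, your proposed weakening of its clause 2 to the models of $\mathfrak{G}_\theta$) is exactly the right repair.
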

\begin{proof}[\nopunct]
\end{proof}

Note that if $(T,<_T)$ is a tree of height and size $\omega_1$, then $(T,\geq_T)$ is proper for models of size $\omega_1$. However, it does not have the $\omega_1$-approximation property if it is nontrivial as a forcing notion. We have the following generalization of  \cite[Theorem 3.7]{CK2017}.
\begin{theorem}\label{theorem-gen}
Assume  ${\rm IGMP}(\omega_2)$. Then every $\omega_1$-preserving forcing which is proper for models of size $\omega_1$ has the $\omega_1$-approximation property.
In particular, under ${\rm IGMP}(\omega_2)$ every $\omega_1$-preserving forcing of size $\omega_1$ has the $\omega_1$-approximation property.
\end{theorem}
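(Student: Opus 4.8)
The plan is to route the statement through \cref{cor1.2}, after checking that ${\rm IGMP}(\omega_2)$ supplies both hypotheses of that corollary. First I would verify that ${\rm IGMP}(\omega_2)$ implies ${\rm GMP}(\omega_2)$. Every indestructibly $\omega_1$-guessing set is in particular $\omega_1$-guessing, and, by our convention on stationarity, the witnesses to stationarity in the definition of ${\rm IGMP}(\omega_2)$ are elementary submodels $M\prec H_\theta$ with $M\cap\omega_2\in\omega_2$; being $\omega_1$-guessing they satisfy $\omega_1\subseteq M$, so $M\cap\omega_2$ is an ordinal with $\omega_1\leq M\cap\omega_2<\omega_2$ and therefore $|M|=\omega_1$. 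Hence the stationary set of indestructibly guessing models already witnesses that the $\omega_1$-guessing elementary submodels of $H_\theta$ of size $\omega_1$ are stationary, which is ${\rm GMP}(\omega_2)$. Secondly, as already remarked in the text, ${\rm IGMP}(\omega_2)$ implies $\mathbb{P}{-\rm IGMP}(\omega_2)$ for every $\omega_1$-preserving $\mathbb{P}$, since the stationarily many indestructibly guessing models remain $\omega_1$-guessing under every $\omega_1$-preserving forcing, in particular under $\mathbb{P}$.

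With these in hand the first assertion is immediate: if $\mathbb{P}$ is $\omega_1$-preserving and proper for models of size $\omega_1$, then ${\rm GMP}(\omega_2)$ and $\mathbb{P}{-\rm IGMP}(\omega_2)$ both hold, so \cref{cor1.2} yields that $\mathbb{P}$ has the $\omega_1$-approximation property. For the ``in particular'' clause I would check that every $\omega_1$-preserving forcing $\mathbb{P}$ with $|\mathbb{P}|=\omega_1$ is proper for models of size $\omega_1$. Indeed, if $M\prec(H_\theta,\in,\mathbb{P})$ has size $\omega_1$ and $\omega_1\subseteq M$, then by elementarity $M$ contains a surjection $f:\omega_1\rightarrow\mathbb{P}$, whence $\mathbb{P}=\ran(f)\subseteq M$; consequently every dense $D\in M$ satisfies $D\cap M=D$, so every condition of $\mathbb{P}$ is trivially $(M,\mathbb{P})$-generic and $\mathbb{P}$ is proper for all such $M$. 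The ``in particular'' statement then follows from the first part.

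The step carrying the genuine content is not the bookkeeping above but the fact, packaged inside \cref{cor1.2} and ultimately in the proof of \cref{Lemma1}, that the $\omega_1$-approximation property can be extracted from merely \emph{stationarily many} indestructibly guessing models rather than from the global assumption that \emph{every} guessing model is indestructible. I therefore expect the only delicate point to be ensuring the correct stationary set is fed into that machinery. If one preferred a self-contained argument not routed through \cref{cor1.2}, I would re-run the implication $2.\Rightarrow 1.$ of \cref{Lemma1} verbatim with $\mathfrak{G}_\theta$ taken to be the stationary set of indestructibly $\omega_1$-guessing submodels supplied by ${\rm IGMP}(\omega_2)$: there the indestructibility of these particular models is exactly what justifies that each $M\in\mathcal{S}$ remains $\omega_1$-guessing in $V[G]$, after which the regressive-function argument produces a single $A^*$ with $A^*\cap M=A\cap M$ on a stationary $\mathcal{S}^*$ and concludes $A^*=A\in V$ from $A\subseteq\bigcup\mathcal{S}^*$.
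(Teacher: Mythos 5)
Your proof is correct and takes essentially the same route as the paper, whose entire proof is a one-line appeal to \cref{Lemma1} (of which \cref{cor1.2} is an immediate consequence); your verifications that ${\rm IGMP}(\omega_2)$ yields ${\rm GMP}(\omega_2)$ and $\mathbb P{-\rm IGMP}(\omega_2)$, and that every $\omega_1$-preserving forcing of size $\omega_1$ is proper for models of size $\omega_1$, are exactly the details the paper leaves implicit. Your closing observation is also well taken: clause 2 of \cref{Lemma1} literally demands that \emph{every} $\omega_1$-guessing model be indestructible, whereas ${\rm IGMP}(\omega_2)$ supplies only stationarily many, so one must (as you note) take $\mathfrak{G}_\theta$ to be the stationary set of indestructibly guessing models and re-run the $2.\Rightarrow 1.$ argument, which correctly identifies and repairs the one loose point in the paper's citation.
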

\begin{proof}
Let $\mathbb P$ be an $\omega_1$-preserving function which is proper for models of size $\omega_1$. As
 ${\rm IGMP}(\omega_2)$ holds,   \cref{Lemma1} implies that $\mathbb P$ has the $\omega_1$-approximation property.
\end{proof}

For a class  $\mathfrak K$ of forcing notions, we let 
 ${\rm FA}(\mathfrak K,\omega_1)$ state that for every $\mathbb P\in\mathfrak K$, and every $\omega_1$-sized family $\mathcal D$ of dense subsets of $\mathbb P$, there is a $\mathcal D$-generic filter $G\subseteq\mathbb P$.

\begin{lemma}\label{Lemma2}
Assume ${\rm FA}(\{\mathbb P\},\omega_1)$, for some
forcing notion $\mathbb P$. Suppose that  $M$ is an $\omega_1$-guessing set of size $\omega_1$.
Then $\mathbb P$ forces that $M$ is $\omega_1$-guessing.
\end{lemma}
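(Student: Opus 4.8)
The plan is to reduce the statement to its nontrivial half and then drive a contradiction with ${\rm FA}(\{\mathbb P\},\omega_1)$. First I would record two preliminaries. Since ${\rm FA}(\{\mathbb P\},\omega_1)$ lets one meet the $\omega_1$ many dense sets that would read off a surjection $\omega\to\omega_1^V$ from a name for a collapse, $\mathbb P$ must preserve $\omega_1$; hence \emph{countable} is absolute between $V$ and $V[G]$, and $\omega_1\subseteq M$ survives. Second, the implication ``guessed $\Rightarrow$ approximated'' is automatic from $\omega_1\subseteq M$ and elementarity of $M$ (for countable $a\in M$ one has $a\subseteq M$, so $a\cap x=a\cap x^*\in M$). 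Thus it suffices to show, in $V[G]$, that every $x$ bounded in $M$ and approximated in $M$ is guessed in $M$. I would argue by contradiction, fixing $p$ and a name $\dot x$ with $p\Vdash$ ``$\dot x\subseteq\check X$ for some $X\in M$, $\dot x$ is approximated in $M$, but $\dot x$ is not guessed in $M$'', working throughout below $p$.

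Next I would introduce two families of dense sets below $p$, each of size at most $|M|=\omega_1$. For every countable $a\in M$ set $D_a=\{q\le p:(\exists b\in M)\ q\Vdash \dot x\cap\check a=\check b\}$; this is dense because $p$ forces $\dot x\cap a\in M$. For every $z\in M$ set $E_z=\{q\le p:(\exists\xi\in M)\ q\Vdash\check\xi\in\dot x\,\triangle\,\check z\}$; this is dense because $p$ forces $z\cap M\neq\dot x\cap M$, so below any condition one can decide a \emph{specific} witnessing $\xi\in M$ of the symmetric difference. As $|M|=\omega_1$ there are at most $\omega_1$ sets $D_a$ and at most $\omega_1$ sets $E_z$, so applying ${\rm FA}(\{\mathbb P\},\omega_1)$ (to $\mathbb P$ below $p$) yields a filter $H$ with $p\in H$ meeting all of them.

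I would then use $H$ to manufacture a ground-model object and close the loop with guessing in $V$. Meeting the $D_a$ gives, for each countable $a\in M$, a value $b_a\in M$ with some condition in $H$ forcing $\dot x\cap a=b_a$; since conditions in $H$ are pairwise compatible, the family $\langle b_a\rangle$ is coherent, so $y\coloneqq\bigcup_a b_a\in V$ satisfies $y\subseteq X\cap M$ and $y\cap c=b_c\in M$ for every countable $c\in M$, i.e.\ $y$ is bounded in $M$ and approximated in $M$ \emph{in $V$}. Because $M$ is $\omega_1$-guessing in $V$, there is $y^*\in M$ with $y^*\cap M=y\cap M=y$. Now $y^*\in M$, so $H$ meets $E_{y^*}$, producing $q\in H$ and $\xi\in M$ with $q\Vdash\xi\in\dot x\,\triangle\,y^*$; and $H$ meets $D_{\{\xi\}}$, producing $q'\in H$ forcing $\xi\in\dot x\Leftrightarrow\xi\in y$. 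A common extension of $q$ and $q'$ then forces $\xi\in y\,\triangle\,y^*$. But this is a statement about ground-model sets, while $\xi\in M$ together with $y^*\cap M=y$ gives $\xi\in y\Leftrightarrow\xi\in y^*$ — a contradiction.

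The step I expect to be the main obstacle is conceptual rather than computational: the filter $H$ delivered by the forcing axiom is \emph{not} generic, so one cannot identify the reconstructed $y$ with $\dot x^G$, and there is no single condition forcing $\dot x\cap M=y^*\cap M$ (indeed $p$ forbids exactly this). The resolution is to set things up as a contradiction and to feed the ``disagreement'' sets $E_z$ into the forcing axiom alongside the reconstruction sets $D_a$: the non-generic $H$ is then trapped in an internal inconsistency, since it simultaneously makes $\dot x$ look like $y$ at the single point $\xi$ and witnesses $\dot x\neq y^*$ at $\xi$, whereas $y$ and $y^*$ agree on all of $M$. I would also be careful to use only the standard closure of $M$ (namely $\omega_1\subseteq M$ and closure under singletons and finite intersections) in verifying coherence of $\langle b_a\rangle$ and that $y$ is approximated in $M$.
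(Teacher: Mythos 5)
Your proof is correct and takes essentially the same route as the paper: you negate the statement below a condition, feed $\omega_1$-many dense sets into ${\rm FA}(\{\mathbb P\},\omega_1)$ --- your $D_a$ and $E_z$ are the paper's approximation-deciding sets $E_x$ and disagreement sets $F_B$, with your singleton instances $D_{\{\xi\}}$ standing in for its pointwise-deciding sets $D_\alpha$ --- and use the resulting filter to reconstruct a ground-model set that is bounded and countably approximated in $M$, closing the loop against the $\omega_1$-guessing of $M$ in $V$. The remaining differences are cosmetic: you keep a general bound $x\subseteq X\in M$ instead of reducing to a subset of an ordinal $\delta\in M$, and you make explicit two points the paper leaves implicit (that the forcing axiom yields $\omega_1$-preservation, hence absoluteness of countability, and that the guessed-implies-approximated direction is automatic).
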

\begin{proof}
Assume towards a contraction that for some $p_0\in\mathbb P$, some ordinal $\delta\in M$, and some $\mathbb P$-name $\dot{A}$, $p_0$ forces that $\dot{A}\subseteq\delta$  is countably approximated in $M$, but is not guessed in $M$. We may assume that $p_0$ is the maximal condition of $\mathbb P$.

\begin{itemize}
\item For every $\alpha\in M\cap\delta$, let $D_\alpha\coloneqq\{p\in\mathbb P: p  \mbox{ decides } \alpha\in \dot{A}\}$. 
\item For every $x\in M\cap \mathcal P_{\omega_1}(\delta)$, let $E_x\coloneqq\{p\in\mathbb P:~  \exists y\in M~  p\Vdash ``\dot{A}\cap x=\check{y}"\}$.
\item For every $B\in M\cap \mathcal P(\delta)$, let $F_B\coloneqq\{p\in\mathbb P: \exists\xi\in M, (p\Vdash``\xi\in\dot{A}" )\Leftrightarrow \xi\notin B\}$.
\end{itemize}
By our assumptions, it is easily seen that the above sets are dense in $\mathbb P$.
Let \[
\mathcal D=\{D_\alpha,E_x,F_B: \alpha,x,B \mbox{ as above }\}.
\]
We have $|\mathcal D|=\omega_1$.
By ${\rm FA}(\{\mathbb P\},\omega_1)$, there is a $\mathcal D$-generic filter $G\subseteq\mathbb P$.
Let $A^*\subseteq\delta$ be defined by 
\[
\alpha\in A^* \mbox{ if and onlty if }~ \exists p\in G \mbox{ with } p\Vdash ``\alpha\in\dot{A}."
\]
By the $\mathcal D$-genericity of $G$, $A^*$ is a well-defined subset of $\delta$ which is countably approximated in $M$ but not guessed in $M$, a contradiction!

\end{proof}

The following theorem is  immediate from \cref{cor1.2,Lemma2}.

\begin{theorem}
Let $\mathfrak K$ be a class of forcings which are proper for models of size $\omega_1$.
Assume that  ${\rm FA}(\mathfrak K,\omega_1)$  and ${\rm GMP}(\omega_2)$ hold. Then, for every forcing $\mathbb P\in\mathfrak K$, $\mathbb P{\rm-IGMP}(\omega_2)$ holds, and $\mathbb P$ has the $\omega_1$-approximation property.
\end{theorem}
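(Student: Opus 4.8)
The plan is to fix an arbitrary $\mathbb P\in\mathfrak K$ and to verify the two hypotheses of \cref{cor1.2} for it, namely that $\mathbb P$ is $\omega_1$-preserving and that $\mathbb P{-\rm IGMP}(\omega_2)$ holds; the $\omega_1$-approximation property then drops out of the stated equivalence. Properness of $\mathbb P$ for models of size $\omega_1$ is immediate since $\mathbb P\in\mathfrak K$, so the only two points requiring argument are $\omega_1$-preservation and $\mathbb P{-\rm IGMP}(\omega_2)$, after which both conclusions follow by citation.

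First I would record that $\mathbb P$ preserves $\omega_1$. This is \emph{not} automatic from properness for models of size $\omega_1$ --- indeed the tree forcings $(T,\geq_T)$ discussed just above are proper for models of size $\omega_1$ yet may collapse $\omega_1$ --- so it must be extracted from the forcing axiom. Since $\mathbb P\in\mathfrak K$, the hypothesis ${\rm FA}(\mathfrak K,\omega_1)$ yields ${\rm FA}(\{\mathbb P\},\omega_1)$. If $\mathbb P$ forced a surjection from $\omega$ onto $\omega_1^V$ via a name $\dot f$, then the $\omega_1$ many dense sets deciding each value $\dot f(n)$, together with the $\omega_1$ many dense sets forcing each $\alpha<\omega_1$ into the range of $\dot f$, would by ${\rm FA}(\{\mathbb P\},\omega_1)$ produce such a surjection already in $V$; this contradiction shows $\mathbb P$ is $\omega_1$-preserving.

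Next I would derive $\mathbb P{-\rm IGMP}(\omega_2)$ directly from \cref{Lemma2} and ${\rm GMP}(\omega_2)$. Fix a sufficiently large regular $\theta$. By ${\rm GMP}(\omega_2)$ the collection of $\omega_1$-guessing elementary submodels $M\prec H_\theta$ with $|M|=\omega_1$ is stationary in $\mathcal P_{\omega_2}(H_\theta)$. Each such $M$ is in particular an $\omega_1$-guessing set of size $\omega_1$, so \cref{Lemma2}, applied with ${\rm FA}(\{\mathbb P\},\omega_1)$, shows that $\mathbb P$ forces $M$ to remain $\omega_1$-guessing. Hence the set of $\omega_1$-guessing sets of size $\omega_1$ that remain $\omega_1$-guessing after forcing with $\mathbb P$ contains a stationary set and is therefore itself stationary; this is exactly what $\mathbb P{-\rm IGMP}(\omega_2)$ asserts.

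Finally, having shown that $\mathbb P$ is $\omega_1$-preserving, proper for models of size $\omega_1$, and satisfies $\mathbb P{-\rm IGMP}(\omega_2)$, I would invoke the implication from $1.$ to $2.$ in \cref{cor1.2} to conclude that $\mathbb P$ has the $\omega_1$-approximation property, which together with the previous paragraph gives both conclusions. The only genuinely non-bookkeeping step is the extraction of $\omega_1$-preservation from the forcing axiom; everything else is a matter of matching the stationarity furnished by ${\rm GMP}(\omega_2)$ against the stationarity demanded by $\mathbb P{-\rm IGMP}(\omega_2)$ and then quoting \cref{cor1.2}.
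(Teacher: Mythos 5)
Your proposal is correct and follows exactly the route the paper intends --- the paper's own proof is the one-liner ``immediate from \cref{cor1.2,Lemma2}'', and your decomposition (${\rm GMP}(\omega_2)$ plus \cref{Lemma2} yields $\mathbb P{-\rm IGMP}(\omega_2)$, after which \cref{cor1.2} yields the $\omega_1$-approximation property) is precisely that argument spelled out. Your extra verification that ${\rm FA}(\{\mathbb P\},\omega_1)$ gives $\omega_1$-preservation supplies a hypothesis of \cref{cor1.2} that the paper leaves tacit; note only that, just as with the paper's ``we may assume $p_0$ is the maximal condition'' move in the proof of \cref{Lemma2}, your collapse argument implicitly applies the forcing axiom below an arbitrary condition, which is the standard convention in this context.
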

\begin{proof}[\nopunct]

\end{proof}


\footnotesize
\noindent\textit{Acknowledgements.}
The author's research was supported through the project M 3024 by the Austrian Science Fund (FWF). This work was partly conducted when the author was a PhD student at the University of Paris. The author would like to thank his former supervisor Boban Veličković for his support and  Mohammad Golshani for his careful reading of an earlier draft of this paper.

\normalsize
\baselineskip=17pt

\printbibliography[heading=bibintoc]

\end{document}